\newtheorem{theorem}{Theorem}
\newtheorem{corollary}{Corollary}
\theoremstyle{definition}
\newtheorem{remark}{Remark}
\begin{document}

\title{New Properties of Triangular Orbits\\in Elliptic Billiards}
\markright{Triangular Orbits in Elliptic Billiards}
\date{July, 2020}

%\author{Blind Copy}

 \author{Ronaldo Garcia, Dan Reznik, and Jair Koiller}

\maketitle

\begin{abstract}
New invariants in the one-dimensional family of 3-periodic orbits in the elliptic billiard were introduced by the authors in "Can the Elliptic Billiard Still Surprise Us?"  (2020) \textit{Math. Intelligencer} 42(1): 6--17, some of which were generalized to $N>3$. Invariants mentioned there included ratios of radii and/or areas, sum of angle cosines, and a special stationary circle. Here we present some of the proofs omitted there as well as a few new related facts. \\
\end{abstract}

\section{Introduction.}
\label{sec:intro}
The elliptic billiard is a particle moving with constant velocity in the interior of an ellipse, undergoing elastic collisions against its boundary \cite{rozikov2018,sergei91}; see Figure~\ref{fig:billiard-trajectories}. It satisfies two integrals of motion: (i) energy: constant velocity and elastic collisions, and (ii) Joachimsthal's: all trajectory segments are tangent to a virtual, confocal ellipse known as the {\em caustic} \cite{sergei91}. The confocal pair renders the elliptic billiard a special case of {\em Poncelet's porism} \cite{dragovic11}: if one $N$-periodic {\em orbit} (a closed trajectory) can be found departing from some boundary point, any other such point can initiate such an orbit, i.e., a 1-dimensional {\em family} of $N$-periodic orbits exists. Integrability produces a first magical consequence: their perimeter is invariant \cite{sergei91}. 

\begin{figure}[H]
    \centering
    \includegraphics[width=.8\textwidth]{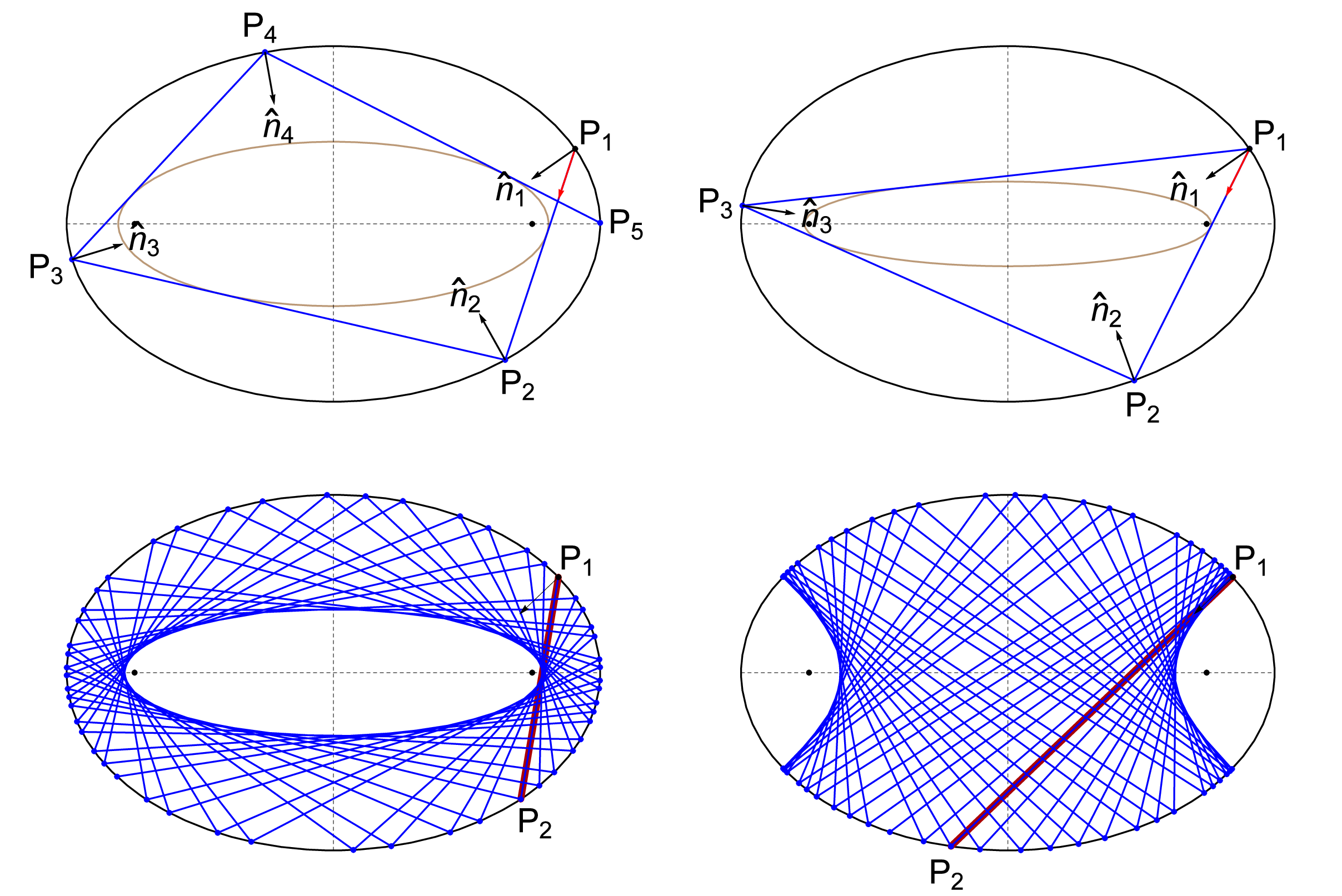}
    \caption{Trajectory regimes in an elliptic billiard. \textbf{Top left}: The first four segments of a trajectory departing at $P_1$ and moving toward $P_2$, bouncing at $P_i, i=2,3,4$. At each bounce the normal $\hat{n}_i$ bisects incoming and outgoing segments. Joachimsthal's integral \cite{sergei91} means all segments are tangent to a confocal {\em caustic} (brown). \textbf{Top right}: All 3-periodic orbits are tangent to a confocal caustic (brown). \textbf{Bottom}: The first 50 segments of a non-periodic trajectory starting at $P_1$ and directed toward $P_2$. Segments are tangent to a confocal ellipse (left) or hyperbola (right). The former (respectively, latter) occurs if $P_1P_2$ passes outside (respectively, between) the elliptic billiard's foci (black dots).}
    \label{fig:billiard-trajectories}
\end{figure}

Here we focus on invariants of 3-periodic orbits; see Figure~\ref{fig:three-orbits-proof}. Because these  are triangles, we make use of an array of classic properties. In \cite{reznik2019-intelligencer,reznik2020-loci} we analyzed the loci of {\em triangle centers} \cite{kimberling97-major-centers} such as the incenter, barycenter, etc. These yield a smorgasbord of algebraic curves: circles, ellipses, quartics, and higher order; see \cite{reznik2019-locus-gallery}.

One early observation was that the locus of the incenter (where bisectors meet) is an ellipse \cite[PL\#01]{reznik2020-playlist-proofs}. Proofs soon followed for the ellipticity of the incenter \cite{olga14}, barycenter \cite{sergei07_grid} and circumcenter \cite{corentin2021-circum,garcia2019-ellipses}, and more recently \cite{reznik2020-loci} for 29 out of the first 100 entries in Kimberling's copious Encyclopedia of Triangle Centers (ETC) \cite{etc}, where centers are identified as $X_i$, e.g., $X_1,X_2,X_3$ for incenter, barycenter, circumcenter.

\begin{figure}[H]
    \centering
    \includegraphics[width=.5\textwidth]{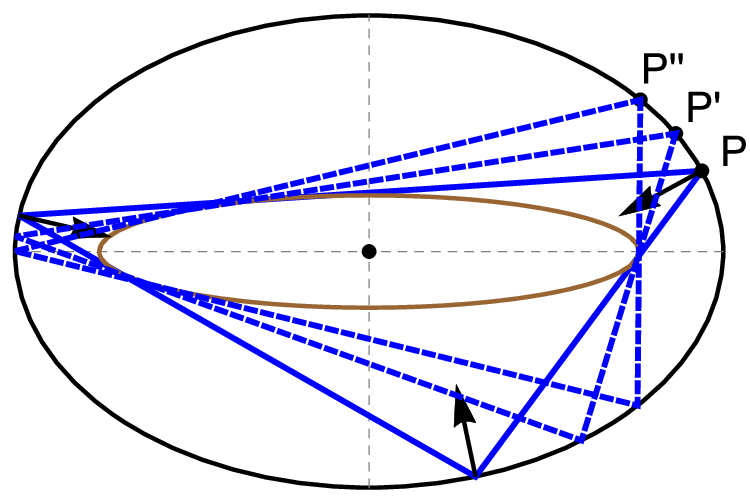}
    \caption{Three members (blue) of the one-dimensional family of 3-periodic orbits. These are triangles inscribed in an ellipse, whose vertices are bisected by the local normals. Joachimsthal's integral \cite{sergei91} prescribes that all trajectory segments (closed or not) are tangent to a confocal caustic (brown), i.e., this is a special case of Poncelet's porism \cite{dragovic88}. Moreover, the entire family of 3-periodics is tangent to that {\em same} caustic!}
    \label{fig:three-orbits-proof}
\end{figure}

Another interesting observation was that the {\em Mittenpunkt} $X_9$ (where lines drawn from each excenter through sides' midpoints concur \cite[Mittenpunkt]{mw}) is stationary at the billiard center \cite{reznik2019-intelligencer}; see \cite[PL\#02]{reznik2020-playlist-proofs}. \\

\subsection{Main Results.}

\begin{itemize}
    \item Theorem~\ref{thm:rovR}: The 3-periodic family conserves the ratio of inradius-to-circumradius, resulting in several corollaries;
    \item Theorem~\ref{thm:area-ratio-outer-inner}: Invariant area ratio between the 3-periodic orbits' excentral and orthic triangles.
    \item Theorem~\ref{thm:delta-power}: The power of the circumcircle of 3-periodic orbits with respect to the center of the elliptic billiard is invariant.
    \item Theorem~\ref{thm:cosine-circle}: 
    The cosine circle of the excentral triangle to 3-periodic orbits is stationary, centered on the Mittenpunkt, and exterior to the elliptic billiard.
    \item Theorem~\ref{thm:rovR-explicit}: An expression is derived for the invariant inradius-to-circumradius ratio in terms of the two classic invariants of the elliptic billiard (perimeter and Joachmisthal's constant).
\end{itemize}

\bigskip
\subsection{Related Work.}
Examples of triangle families under certain constraints which have been studied include the following: (i) Poristic triangles \cite{Gallatly1913}, with fixed incircle and circumcircle. These by definition   preserve  $r/R$ (an invariant shared with 3-periodics in the elliptic billiard). First studied by Chapple in 1761, then Euler, a modern treatment is given in \cite{Weaver1924,Weaver1933} and \cite{Murnaghan1925}. More recently, several of its triangle centers have been shown to produce circular and pointwise loci \cite{Odenhal2011}, and to be related to the billiard 3-periodics under a similarity transform \cite{garcia2020-poristics}. (ii) Common incircle and centroid: vertices lie on a conic \cite{Pamfilos2011}. (iii)
Triangles with sides tangent to a circle \cite{Nikolina-families2012}. (iv) Triangles associated with two lines and a point not on them \cite{Sliepcevic2013}. Also related is the family of rectangles inscribed in smooth curves \cite{schwartz2018-rectangles}. The locus of centroids of Poncelet polygons is studied in \cite{schwartz2016-com}, whereas that of the circumcenter is studied in \cite{ana2020}.

\bigskip
\subsection{Outline of the article.} In Section~\ref{sec:prelim} we introduce preliminaries. Section~\ref{sec:rovr} contains proofs for invariant inradius-to-circumradius ratio and corollaries. Section~\ref{sec:cosine-circle} describes a stationary circle associated with 3-periodic orbits. Generalizations for $N>3$ appear in Section~\ref{sec:gener}. Section~\ref{sec:experimental} presents experimental results which served as a guide to the theorems. A table of videos illustrating some of the phenomena discussed herein is provided in Appendix~\ref{app:videos} and Appendix~\ref{app:orbit-vertices} provides explicit expressions for 3-periodic orbit vertices.

\section{Preliminaries: Classic Invariants.}
\label{sec:prelim}
Let the boundary of the elliptic billiard satisfy

\begin{equation}
\label{eqn:billiard-f}
f(x,y)=\left(\frac{x}{a}\right)^2+\left(\frac{y}{b}\right)^2=1,\;\;\;a>b.
\end{equation}

Joachimsthal's integral implies that every trajectory segment is tangent to the caustic \cite{sergei91}. Equivalently, a positive quantity $\gamma$ (called $J$ in \cite{akopyan2020-invariants,bialy2020-invariants}) remains invariant, whether the trajectory is closed or not, at every bounce point $P_i$:

\begin{equation}
 \gamma=\frac{1}{2}\hat{v}.\nabla{f_i}=\frac{1}{2}|\nabla{f_i}|\cos\alpha,
 \label{eqn:joachim}
\end{equation}

\noindent where $\hat{v}$ is the unit incoming (or outgoing) velocity vector, and

\begin{equation*}
\nabla{f_i}=2\left(\frac{x_i}{a^2}\,,\frac{y_i}{b^2}\right).
\label{eqn:fnable}
\end{equation*}

Consider a starting point $P_1=(x_1,y_1)$ on the boundary of the elliptic billiard. The the exit angle $\alpha$ (measured with respect to the normal at $P_1$) required for the trajectory to close after 3 bounces is given by \cite{garcia2019-ellipses}: 

\begin{equation}
\cos{\alpha}={\frac {a^2 b \, \sqrt {2 \delta-{a}^{2}-{b}^{2}}}{{c}^{2}\sqrt {{a}^{4}-{c}^{2} x_1^{2}}}}, \;\; c^2=a^2-b^2,\;\; \delta=\sqrt{a^4-a^2b^2+b^4}\,.
\label{eqn:cosalpha}
\end{equation}

\noindent Note: Regarding the quantity $\delta$, see Theorem~\ref{thm:power_delta} (below) for a nice geometric interpretation. \\

Choosing $P_1=(a,0)$, we derive $\gamma$ based on equations~\eqref{eqn:joachim} and \eqref{eqn:cosalpha}:

\begin{equation}
\gamma=\frac{\sqrt{2\delta-a^2-b^2}}{c^2}.
\end{equation}

\noindent When $a=b$, $\gamma=\sqrt{3}/2$ and when $a/b{\to}\infty$, $\gamma{\to}0$. \noindent Using explicit expressions for the orbit vertices \cite{garcia2019-ellipses} (reproduced in Appendix~\ref{app:orbit-vertices}), we derive the perimeter $L=s_1+s_2+s_3$: 

\begin{equation}
L=2(\delta+a^2+b^2)\gamma.
\label{eqn:perimeter}
\end{equation}

\noindent Incidentally, quantity $\delta$ has the following geometric interpretation: it is the power of $X_9$ with respect to the orbit's circumcircle.

\section{Invariance of Inradius-to-Circumradius Ratio.}
\label{sec:rovr}
Referring to  Figure~\ref{fig:radii}, let $r$, $R$, and $r_9$ denote the radii of the incircle, circumcircle, and 9-point circle of a 3-periodic orbit, respectively. These are centered on $X_1$, $X_3$, and $X_5$, respectively \cite{coxeter67}. The Mittenpunkt $X_9$, is stationary at the elliptic billiard center over the 3-periodic orbit family \cite{reznik2019-intelligencer}.

\begin{figure}[H]
    \centering
    \includegraphics[width=.75\textwidth]{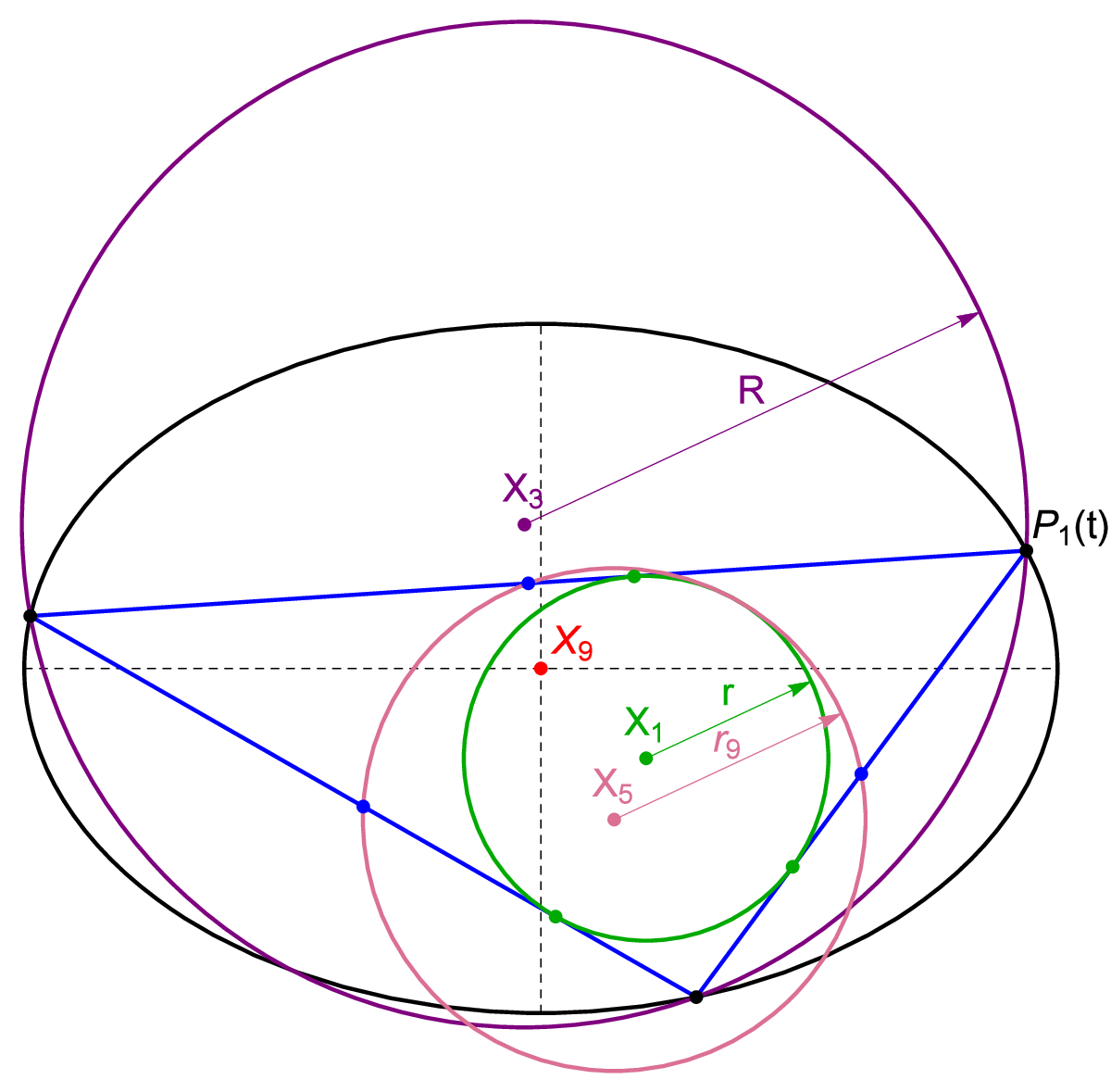}
    \caption{A 3-periodic orbit (blue), a starting vertex $P_1$, the incircle (green), circumcircle (purple), and 9-point circle (pink), whose centers are $X_1$, $X_3$, and $X_5$, and radii are the inradius $r$, circumradius $R$, and 9-point circle radius $r_9$. The Mittenpunkt $X_9$ is stationary at the billiard center \cite{reznik2019-intelligencer}.}
    \label{fig:radii}
\end{figure}

\noindent We now prove a result announced in  \cite{reznik2019-intelligencer}:

\begin{theorem}
\label{thm:rovR}
$r/R$ is invariant over the 3-periodic orbit family and given by
\begin{equation}
\label{eqn:rovR}
\frac{r}{R}=\frac{2 (\delta-b^2)(a^2-\delta)}{(a^2-b^2)^2}.
\end{equation}
\end{theorem}

\begin{proof}
Let $r$ and $R$ be the radius of the incircle and circumcircle, respectively. For any triangle \cite{coxeter67} we have

\begin{equation*}
 rR=\frac{s_1s_2s_3}{2 L}, 
\end{equation*}

\noindent where $L=s_1+s_2+s_3$ is the perimeter, constant for 3-periodic orbits; see equation \eqref{eqn:perimeter}. Therefore,

\begin{equation}
\frac{r}{R}=\frac{1}{2L} \frac{s_1s_2s_3}{R^2}\cdot
\label{eqn:rovR-cas}
\end{equation}

Next, with $P_1=(a,0)$, obtain a {\em candidate} expression for $r/R$. This yields \eqref{eqn:rovR} exactly. Using explicit expressions for orbit vertices (see Appendix~\ref{app:orbit-vertices}), derive an expression for the square of the right-hand side of \eqref{eqn:rovR-cas} as a function of $x_1$ and subtract from it the square of \eqref{eqn:rovR}. It can be shown $\left(s_1s_2s_3/R^2\right)^2$ is rational on $x_1$ \cite{reznik2020-loci}. For simplification, use $R=s_1 s_2 s_3/(4A)$, where $A$ is the triangle area. With a computer algebra system (CAS), show said difference is identically zero for all $x_1\in(-a,a)$.
\end{proof}

Though the ratio $r/R$ is invariant, the maxima and minima of $r$ and $R$ occur at the isosceles configurations of the 3-periodic orbits; see Figure~\ref{fig:rR-min-max}. 

\begin{figure}[H]
    \centering
    \includegraphics[trim={0 0.5cm 0 0},clip,width=\textwidth]{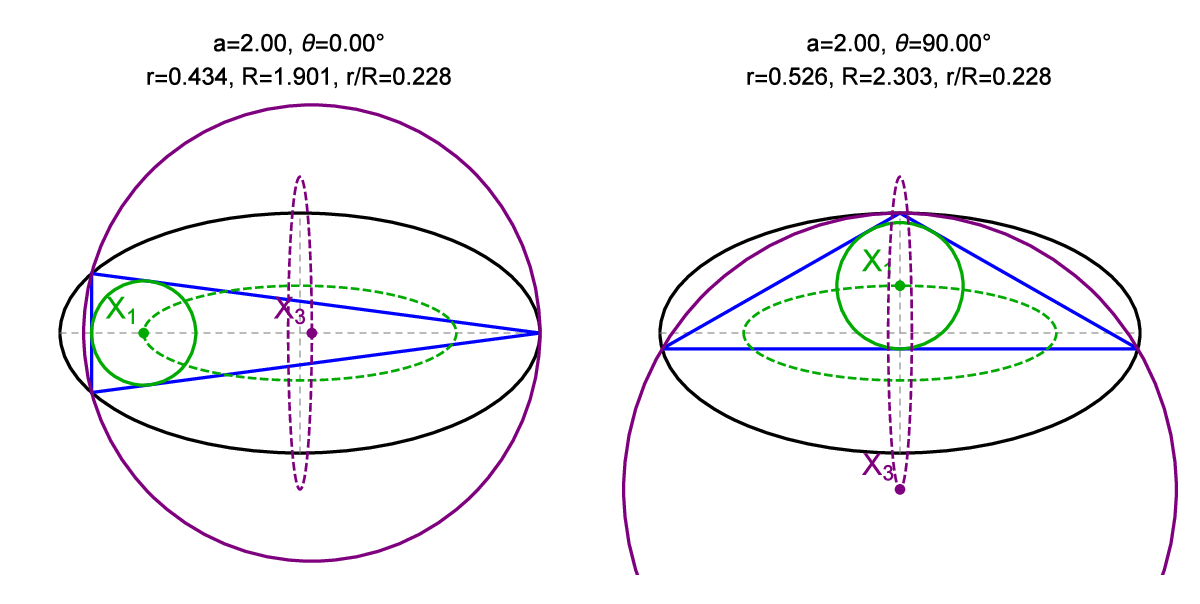}
    \caption{\textbf{Left}: When the 3-periodic orbit (blue) is a sideways isosceles triangle (one of its vertices   at either horizontal vertex of the elliptic billiard), $r$ and $R$ are minimal. The incircle and circumcircle are shown (green and purple, respectively), as are the loci of the incenter and circumcenter (dashed green and dashed purple, respectively). \textbf{Right}: When the orbit is an upright isosceles triangle (one vertex   the top or bottom vertex of the elliptic billiard), $r$ and $R$ are maximal.}
    \label{fig:rR-min-max}
\end{figure}

%\begin{proof}
%For any triangle we have the following relations.
%	\[rR= \frac{s_as_bs_c }{2(s_a +s_b +s_c	)}\]
%	\[R=\frac{r}{\cos A+\cos B+\cos C-1}=\sqrt{\frac{s_a^2+s_b^2+s_c^2}{8(1+  \cos A \cos B\cos C) }}\]
%\[ \frac{r}{R}= \cos A+\cos B+\cos C-1 \]
%\end{proof}

Note: The proof above and a few proofs below were obtained with the  assistence of   a Computer Algebra System. Indeed, expressions deriving from those in Appendix~\ref{app:orbit-vertices} quickly become unwieldy. We encourage the interested reader to produce simpler algebraic or, preferably, synthetic alternatives.

The three relations below hold for any triangle \cite{johnson29}. These will be used in subsequent corollaries.

\begin{eqnarray}
\sum_{i=1}^{3}{\cos\theta_i}&=&1+\frac{r}{R} \label{eqn:sum-cos} \\
\prod_{i=1}^{3}{|\cos\theta_i'|}&=&\frac{r}{4R} \label{eqn:exc-prod-cos} \\
\frac{A}{A'}&=&\frac{r}{2R}
\label{eqn:area-ratio}
\end{eqnarray}

\noindent where $\theta_i$ are the angles internal to the orbit, $\theta_i'$ are those of the excentral triangle (opposite to orbit $\theta_i$), and $A$ (respectively, $A'$) is the area of the orbit (respectively, excentral triangle); see Figure~\ref{fig:orbit-outer-inner}.

\begin{corollary}
The sum of the orbit cosines, the product of excentral cosines, and the ratio of excentral-to-orbit areas are all constant.
\end{corollary}

Note that the absolute value in \eqref{eqn:exc-prod-cos} can be dropped as the excentral triangle is always acute \cite{coxeter67}.

In \cite{reznik2019-intelligencer} we reported experiments that showed that the left-hand sides of \eqref{eqn:sum-cos} and \eqref{eqn:exc-prod-cos} were constant for all $N$-periodic orbits, whereas \eqref{eqn:area-ratio} was constant for odd $N$ only. These generalizations were subsequently proved \cite{akopyan2020-invariants,bialy2020-invariants}.

\begin{figure}[H]
    \centering
    \includegraphics[width=.6\textwidth]{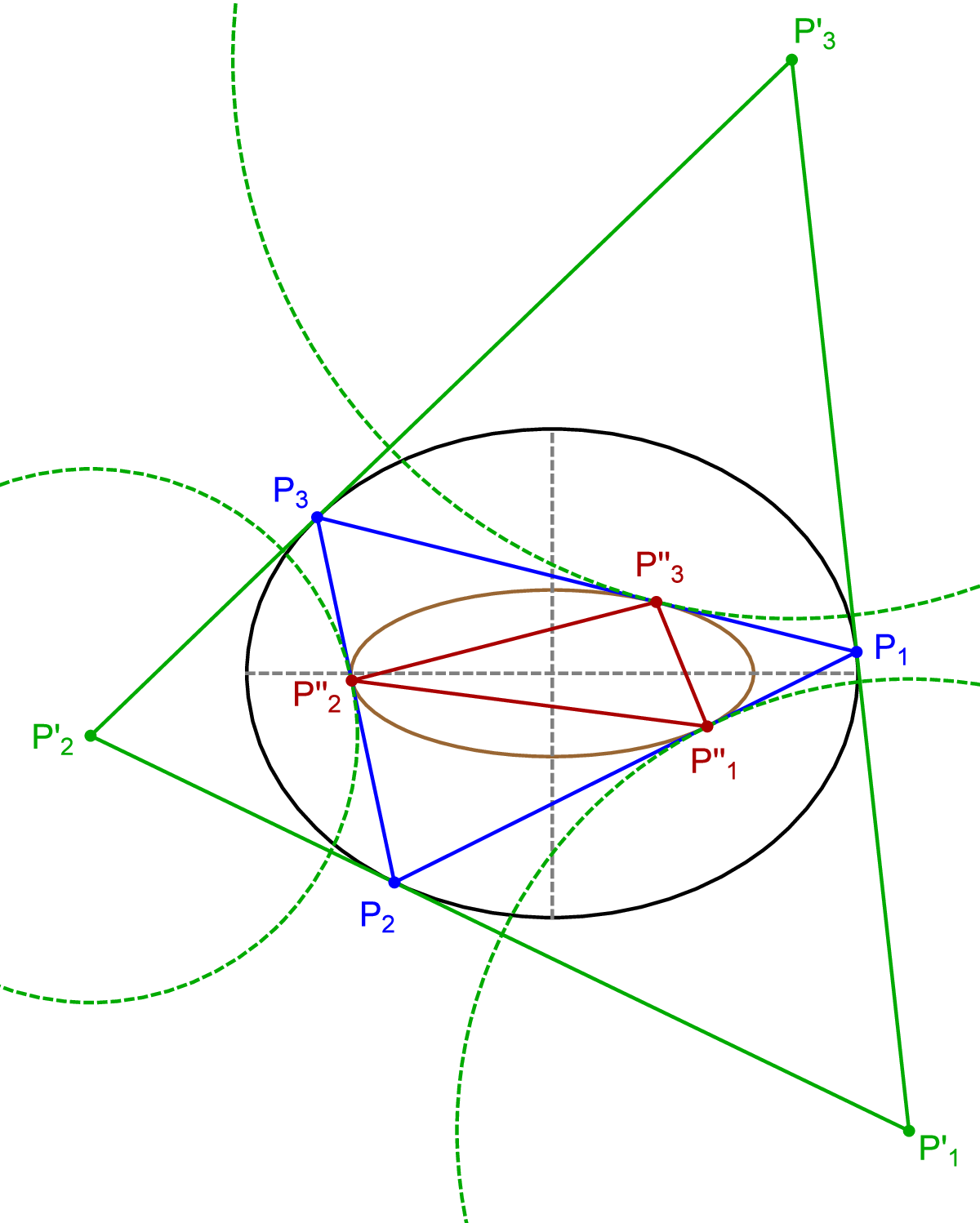}
    \caption{An $a/b=1.25$ elliptic billiard is shown (black) as well as a 3-periodic orbit $T=P_1P_2P_3$ (blue). The orbit's excentral triangle $T'=P_1'P_2'P_3'$ (green) has vertices at the intersections of exterior bisectors (therefore it is tangent to the elliptic billiard at the orbit vertices). The extouch triangle $T''=P_1''P_2''P_3''$ (red) has vertices   where excircles (dashed green) touch each side. Said vertices are known to lie on the orbit's caustic (brown) as it is its Mandart inellipse \cite[Mandart Inellipse]{mw}.  $A,A',A''$ are the areas of $T,T',T''$, respectively \textbf{Video}: \cite[PL\#06]{reznik2020-playlist-proofs}}
    \label{fig:orbit-outer-inner}
\end{figure}

\subsection{Excentral-to-Extouch Area Ratio.}

Let $A''$ denote the area of the extouch triangle, whose vertices are where each excircle touches a side \cite[Extouch Triangle]{mw}; see Figure~\ref{fig:orbit-outer-inner}.

\begin{remark}  
The vertices of the orbit's extouch triangle lie on the caustic.
\end{remark}

Indeed, these are known lie on the Mandart inellipse, whose center is the Mittenpunkt $X_9$ \cite[Mandart Inellipse]{mw}; therefore it must be the caustic; see (\cite[PL\#06]{reznik2020-playlist-proofs}).

\begin{theorem}
$A'/A''$ is invariant and equal to $(2R/r)^2$.
\label{thm:area-ratio-outer-inner}
\end{theorem}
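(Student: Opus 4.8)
The plan is to reduce the ratio $A'/A''$ to quantities already shown to be invariant, using classical triangle-geometry identities for the areas of the Excentral and Extouch triangles together with the invariance of $r/R$ established in Theorem~\ref{prop:rov-candidate}. The target claim $A'/A'' = (2R/r)^2$ will follow if I can express each of the two areas as a clean multiple of the orbit area $A$ (or of $r$, $R$, $s$) and then take the quotient.

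First I would recall the standard formulas. For the Excentral Triangle $T'$, a well-known identity gives its area in terms of the orbit: $A' = 2sR$, where $s=L/2$ is the semiperimeter and $R$ the orbit circumradius (equivalently $A' = A\,R/r$, since $A=rs$). For the Extouch Triangle $T''$, the analogous classical expression is $A'' = (r/(2R))\,A$ up to the appropriate constant; more precisely one uses the Mandart-inellipse description, since by the Observation the Extouch vertices lie on the Caustic. I would either cite these area formulas directly from \cite{mw,johnson29} or derive them by writing each triangle's vertices via the contact/excircle data and applying the shoelace determinant.

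Combining the two gives
\begin{equation*}
\frac{A'}{A''} = \frac{A\,R/r}{A\,(r/2R) \cdot c} = \frac{2R^2}{r^2}\cdot\frac{1}{c}
\end{equation*}
for some universal constant $c$; pinning down that constant so the answer reads exactly $(2R/r)^2$ is the crux. The cleanest route is to combine with the already-stated Corollary relation \eqref{eqn:area-ratio}, $A/A' = r/(2R)$, which yields $A' = (2R/r)A$ immediately. Then it remains to show $A'' = (r/(2R))^2\,(A'/1)\cdot(\ldots)$, i.e. to prove the single clean identity $A'' = A\,(r/(2R))$ relating the Extouch area to the orbit area via the same factor. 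Dividing $A' = (2R/r)A$ by this gives $A'/A'' = (2R/r)/(r/2R) = (2R/r)^2$, as claimed.

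The main obstacle will be establishing the correct proportionality constant for the Extouch area $A''$, since there is no completely standard one-line formula for it and the naive guess can be off by a rational factor. I would resolve this by using the Mandart-inellipse identity: the Extouch Triangle is inscribed in the Caustic with its centroid/center at the Mittenpunkt $X_9$, and its area relative to the reference triangle is a ratio-of-symmetric-functions expression in the side lengths. Concretely, I would compute $A''$ from the Extouch vertex coordinates $P_i''$ (the points where each excircle meets a side, with known barycentric coordinates) via the shoelace/area-ratio determinant, simplify using $s$, $r$, $R$ and the orbit invariants $\gamma$ and $\delta$, and verify the constant collapses to give exactly $(r/2R)A$. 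As in the proof of Theorem~\ref{prop:rov-candidate}, a CAS verification that the difference $A'/A'' - (2R/r)^2$ vanishes identically in $x_1\in(-a,a)$ would serve as an independent confirmation once the symbolic vertex expressions from Appendix~\ref{app:orbit-vertices} are substituted.
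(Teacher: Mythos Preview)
Your approach is essentially the paper's: factor $A'/A''=(A'/A)(A/A'')$, use the already-established relation $A'/A=2R/r$ from \eqref{eqn:area-ratio}, and then determine $A/A''$. Where you diverge is only in how you plan to get $A/A''$. You treat the formula $A''=(r/2R)A$ as the ``main obstacle'' and propose to extract it via barycentric Extouch coordinates, the Mandart inellipse, or a CAS check in $x_1$. The paper bypasses all of that with a single classical identity valid for \emph{any} triangle:
\[
\frac{A'}{A}=\frac{A}{A''}=\frac{s_1 s_2 s_3}{r^2 L},
\]
so $A/A''$ automatically equals the same $2R/r$, and $(A'/A)(A/A'')=(2R/r)^2$ is immediate. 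Your route would succeed, but it is more work than necessary; citing this equality of ratios (it follows from the standard area formulas for the Excentral and Extouch triangles in \cite{mw}) is the missing shortcut.

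One minor slip: in your first paragraph you write $A'=A\,R/r$ as equivalent to $A'=2sR$; since $A=rs$ this actually gives $A'=2AR/r$, consistent with $A/A'=r/(2R)$.
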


\begin{proof}
Combine the expressions for the area of the excentral \cite[Excentral Triangle]{mw} and extouch triangles \cite[Extouch Triangle]{mw} as a function of sidelengths $s_i$, inradius $r$, and perimeter $L$, obtain $A'/A=A/A''=(s_1 s_2 s_3)/(r^2 L)$. Since $A'/A=r/(2R)$  and $A'/A''=(A'/A)(A/A'')$, the result follows from direct calculations.
\end{proof}

\subsection{Billiard-to-Caustic Area Ratio.}

The $N=3$ caustic semi-axes are given by \cite{garcia2019-ellipses,reznik2020-loci}:

\begin{align}
a_c=&\frac{a\left(\delta-{b}^{2}\right)}{c^2},\;\;\;\;
b_c=\frac{b\left({a}^{2}-\delta\right)}{c^2}\cdot
\end{align}

Note that two concentric, axis-aligned ellipses produce a 3-periodic Poncelet family if and only if $a/a_c+b/b_c=1$ \cite{georgiev2012-poncelet}, which holds for the above $a_c,b_c$.

\begin{corollary}
\label{cor:caustica_billiard}
Let $A_b$ and $A_c$ be the areas of the elliptic billiard and the $N=3$ confocal caustic, respectively. Then

\begin{equation}
\frac{A_c}{A_b}=\frac{r}{2R}\cdot
\label{eqn:caustic-area-ratio}
\end{equation}
\end{corollary}

\begin{proof}
As $A_c=\pi{a_c}{b_c}$ and $A_b={\pi}{a}{b}$, the result follows from equation~\eqref{eqn:rovR}.
\end{proof}

  Notice the caustic-to-billiard area ratio is equal to the orbit-to-excentral area ratio, equation~\eqref{eqn:area-ratio}. Recall also that the caustic semi-axes $a_c,b_c$ depend on $N$, i.e., equation  \eqref{eqn:caustic-area-ratio} is specific for the $N=3$ case.

The power of a point $Q$ with respect to a circle centered at $C_0$ of radius $\mu$ is given by $|Q-C_0|^2-\mu^2$ \cite[Circle Power]{mw}. Let $\mathcal{C}$ be the (moving) circumcircle to the 3-periodic orbits.

\begin{theorem}\label{thm:power_delta}
The power of the billiard center $O$ with respect to $\mathcal{C}$ is invariant and equal to $-\delta$.
\label{thm:delta-power}
\end{theorem}

\begin{proof}
 Consider an isosceles 3-periodic orbit with $P_1=[a,0]$,
 {\small  
 \[  \;  P_2   =\left[\frac {{a}^{2}\sqrt {2\,\delta-{a}^{2}-{b}^{2}}}{{a}^{2}-{b}^{2}},   
	\frac { \left(\delta  -{a}^{2}\right) b}{{a}^{2}-{b}^{2}}\right]\;\;
	 \text{ and  }\;\;  P_3= \left[{\frac {{a}^{2}\sqrt {2\,\delta-{a}^{2}-{b}^{2} }}{{a}^{2}-{b}^{2}}},
	{\frac { \left(  {a}^{2}-\delta \right) b}{{a}^{2}-{b}^{2}}}\right].
	\]
 }%
	Its circumcircle will be centered at $C_0=[ {\frac { {b}^{2}-\delta}{2b}},0]$ with circumradius $R_0=\frac {{b}^{2}+\delta}{2b}.$
	Therefore, the power of the center of the ellipse with respect to the circumcircle is given by  
	$$|OC_0|^2-R_0^2=\left(\frac { {b}^{2}-\delta}{2b}\right)^2 - \left(\frac {{b}^{2}+\delta}{2b}\right)^2=-\delta.$$

For a generic 3-periodic orbit, the stated invariance is confirmed via a CAS, using the explicit vertex expressions in \cite{garcia2019-ellipses} reproduced in Appendix \ref{app:orbit-vertices}.
\end{proof}

\section{Cosine Circle is External to Billiard.}
\label{sec:cosine-circle}
The {\em cosine circle} (also known as the second Lemoine circle) \cite[Cosine Circle]{mw} of a triangle passes through 6 points: the 3 pairs of intersections of sides with lines drawn through the symmedian $X_6$ parallel to sides of the orthic triangle. Recall that the orthic vertices are the feet of altitudes. Its center is $X_6$ \cite[Cosine Circle]{mw}. If one takes the excentral triangle of an orbit as the reference triangle, it is easy to see its orthic is the orbit itself; see Figure~\ref{sec:cosine-circle}.

\begin{theorem}
The cosine circle of the excentral triangle is invariant over the family of 3-periodic orbits. Its radius $r^*$ is constant and it is concentric and external to the elliptic billiard.
\label{thm:cosine-circle}
\end{theorem}

\begin{proof}
Once again, we set $P_1=(a,0)$, derive a candidate expression for $r^*$, and with a CAS check if the claim holds for all $x_1\in(-a,a)$. This yields

\begin{equation}
r^*=\frac{a^2-b^2}{\sqrt{2\delta-a^2-b^2}}\cdot
\end{equation}

Let $a>b>0$ and $\delta=\sqrt{a^4-a^2 b^2+b^4}$. As $0 < {(a^2-b^2)}^{2}<\delta^2$; it follows that
\begin{equation*} (r^{*})^{2}=  \frac{a^2+ b^2+2\delta}{3}
  > \; \frac{a^2+b^2+2(a^2-b^2)}{3} > \; a^2\cdot
\end{equation*}

As the excentral cosine circle and the elliptic billiard are concentric, the proof is complete.
\end{proof}

\begin{figure}[H]
    \centering
    \includegraphics[width=.65\textwidth]{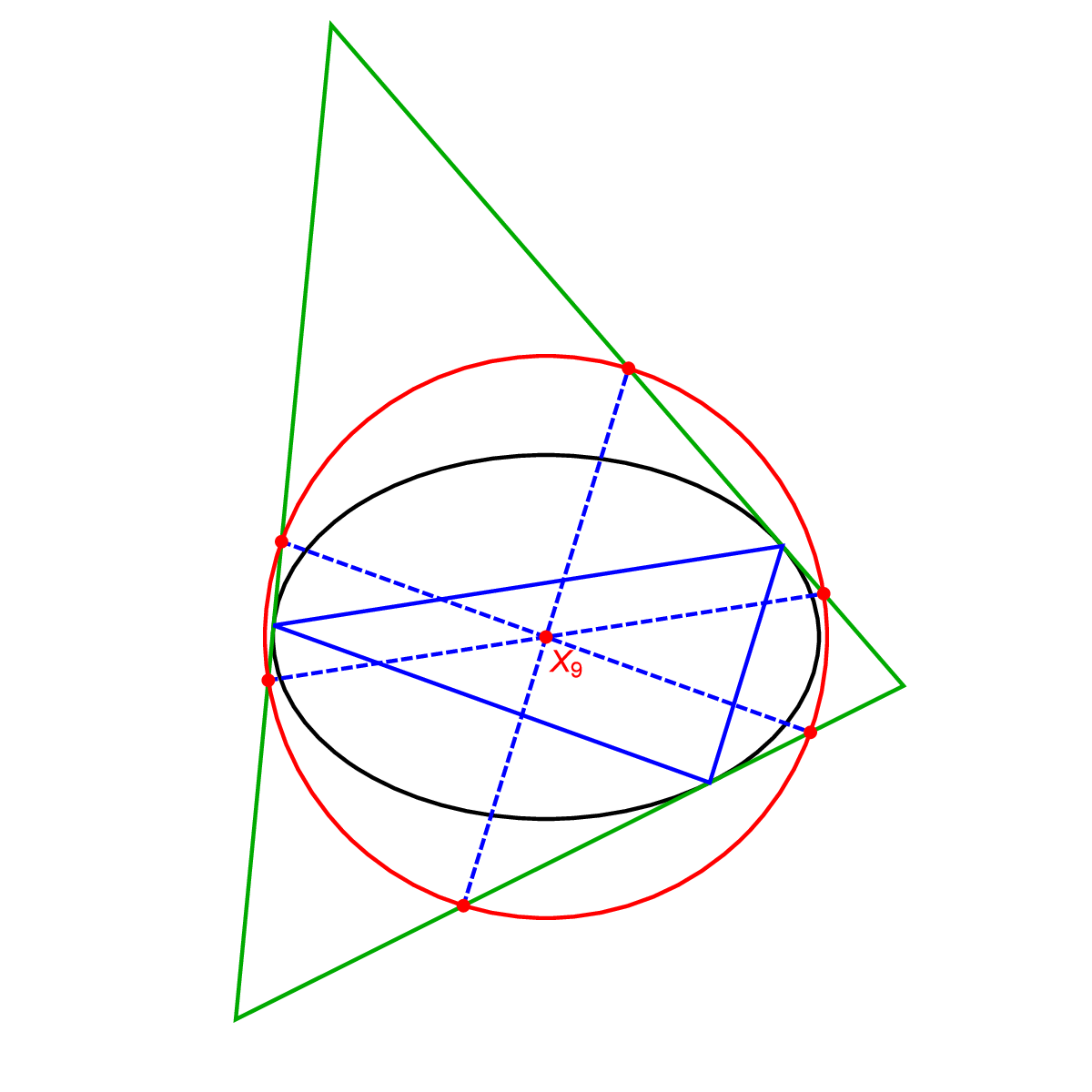}
    \caption{Given a 3-periodic orbit (blue), the cosine circle (red) of the excentral triangle (green) passes through the three pairs of intersections of the dashed lines with the excentral triangle. These are lines parallel to the orbit's sides drawn through the excentral's symmedian point $X_6$, congruent with the orbit's Mittenpunkt $X_9$. This circle is stationary across the 3-periodic orbit family and always exterior to the elliptic billiard. \textbf{Video:} \cite[PL\#04]{reznik2020-playlist-proofs}.}
    \label{fig:cos-circle}
\end{figure}

\begin{remark}
S. Tabachnikov kindly contributed 
\cite{reznik2019-intelligencer} the following equivalent expression for $r^*$, valid for all $N$:

\begin{equation}
    r^* = 1/\gamma\cdot
\label{eqn:rstar-sergei}
\end{equation}
\end{remark}

\begin{remark}[Youtube Math]

In the comments section to our cosine circle video \cite[PL\#05]{reznik2020-playlist-proofs}, D. Laurain contributed an alternative expression for $r^*$ \cite{dominique19}:

\begin{equation}
    r^* = \frac{2 L}{r/R+4}\cdot
    \label{eqn:rstar-laurain}
\end{equation}
\end{remark}

\noindent Combining this with $r^*=1/\gamma$ \eqref{eqn:rstar-sergei}, we obtain the following.

\begin{theorem}
${r/R}={\gamma}L-4$, where $\gamma$ is Joachimsthal's constant and $L$ is the orbit's perimeter.
\label{thm:rovR-explicit}
\end{theorem}

%\section{The Circumbilliard}
%\label{sec:circumbilliard}
%\input{007_circumbilliard}

\section{Generalizing to $N>3$.}
\label{sec:gener}
Theorem~\ref{thm:rovR-explicit} motivated us to conjecture that for all $N$

\begin{equation*}
\sum_{i=1}^{N}{\cos\theta_i}=1+\frac{r}{R}=\gamma L - N\cdot
\end{equation*}

\noindent To our delight, this was proved \cite{akopyan2020-invariants,bialy2020-invariants}. Likewise, observing that the excentral-to-orbit area ratio $A'/A$ is invariant for $N=3$ (see equation \eqref{eqn:area-ratio}) motivates querying via experiment whether this quantity remains numerically invariant for $N>3$. Indeed, one observes it does, however only for {\em odd} N. This was also subsequently proved \cite{akopyan2020-invariants}.

Recently, we've numerically detected dozens of new invariants for $N$-periodic orbits \cite{reznik2020-forty-invariants}, many of which have already been   established rigorously. Indeed, curiosity-driven experimentation with the elliptic billiard and related Poncelet families has proven an effective and entertaining approach with which to discover new properties and/or invariants.

\section{Experimental Method.}
\label{sec:experimental}
Our process of discovery of the constancy of $r/R$ started with picking a particular $a/b$ and plotting $R$, $r_9$, and $r$ vs. the $t$ parameter in $P_1(t)$; see Figure~\ref{fig:radii-grid2} (top). This confirmed the well-known relation $R/r_9=2$, valid for any triangle \cite{coxeter67}. It also suggested  that $R$ might be proportional to $r$, which is not true for any triangle family.

To investigate this potential relationship, we produced a scatter plot of orbit triangles for a discrete set of $a/b$ in $(r,R)$-space. One notices that triangles corresponding to individual $a/b$ fall on straight-line segments, and that all segments pass through the origin, suggesting that $r/R$ is a constant; see Figure~\ref{fig:radii-grid2} (bottom). One also notices each segment has endpoints $(R_\text{min},r_\text{min})$ and $(R_\text{max},r_\text{max})$. These are produced at isosceles orbit configurations (see Figure~\ref{fig:rR-min-max}) and are of the form

\begin{align}
r_{\mathrm{min}}=&  \frac{b^2(\delta - b^2)}{c^2 a},\;\;\;R_{\mathrm{min}}= \frac{a^2+\delta}{2a} \nonumber \\
r_{\mathrm{max}}=&  \frac{a^2(a^2 - \delta)}{c^2 b},\;\;\;R_{\mathrm{max}}= \frac{b^2+\delta}{2b}
\label{eqn:rmin-rmax}
\end{align}

\begin{figure}[H]
    \centering
    \includegraphics[width=0.70\textwidth]{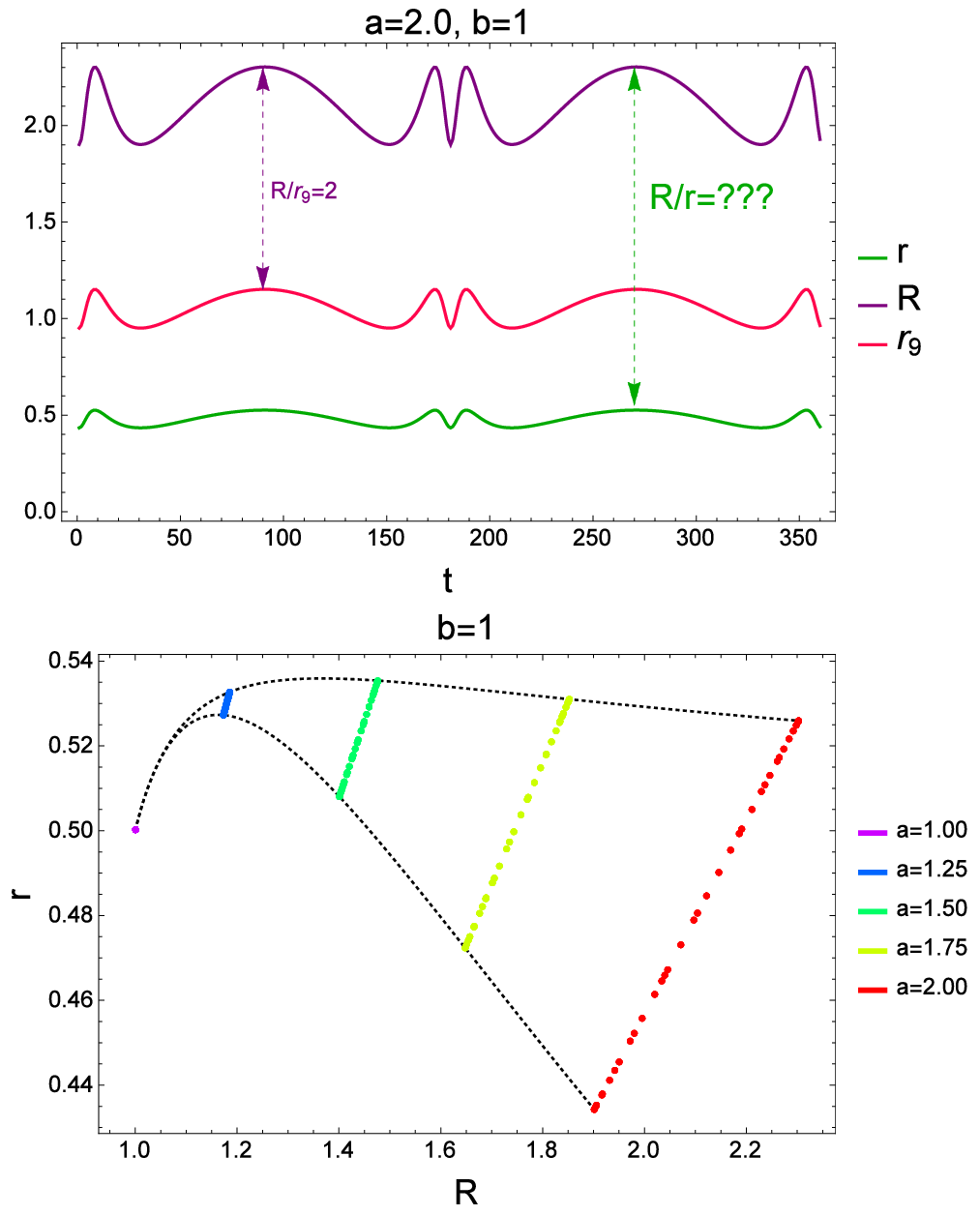}
    \caption{\textbf{Top}: The three radii (inradius $r$, circumradius $R$ and radius of the 9-point circle $r_9$ plotted vs. $t$ of $P_1(t)=(a \cos t,\sin t)$. $R/r_9=2$ holds for any triangle, though $R/r$ is not constant in general. \textbf{Bottom}: Scatter plot in $(r,R)$-space: each dot is a 3-periodic orbit in a discrete set of $a/b$ families. Each family of triangles organizes along a straight line, suggesting the $r/R$ ratio is constant. The minimum and maximum $(r,R)$ that can occur in a family are bound by two continuous curves (dotted black); see \eqref{eqn:rmin-rmax}.}
    \label{fig:radii-grid2}
\end{figure}

For illustration, Figure~\ref{fig:rovR-vs-ab} shows the monotonically decreasing dependence of $r/R$ vs. $a/b$.

\begin{figure}[H]
    \centering
    \includegraphics[width=.60\textwidth]{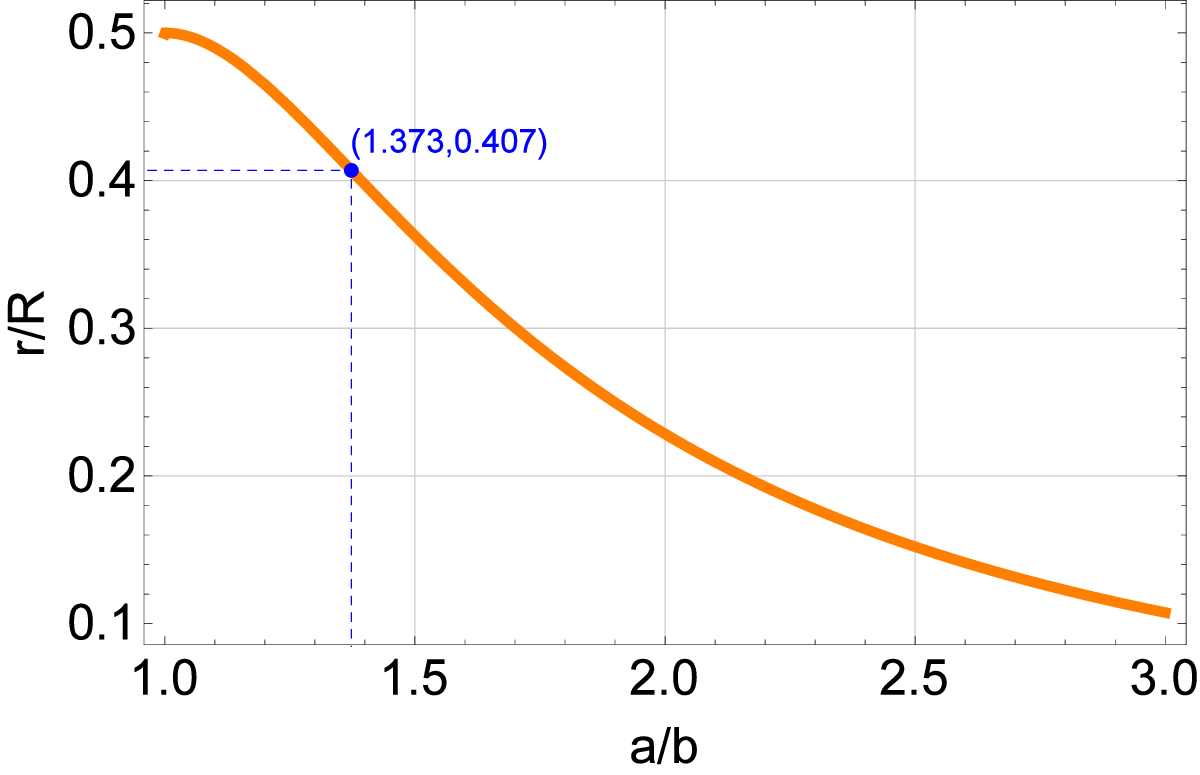}
    \caption{Dependence of $r/R$ on the elliptic billiard aspect ratio $a/b$. The maximum is achieved when the elliptic billiard is a circle. The curve contains an inflection point that still lacks a geometric interpretation.}
    \label{fig:rovR-vs-ab}
\end{figure}

% \section{Conclusion.}
 %\label{sec:conclusion}
 %\input{010_conclusion}

%\section*{Acknowledgments}
%\input{011_acknowledgments}

\appendix
\section{Video List.} 
\label{app:videos}
Table~\ref{tab:playlist} lists videos mentioned or illustrating some phenomena in this article. These can be viewed in sequence in the YouTube playlist \cite{reznik2020-playlist-proofs}.

\begin{table}[H]
\caption{Videos mentioned in the article. Column ``PL\#'' indicates the entry within a YouTube playlist \cite{reznik2020-playlist-proofs}.}
\begin{tabular}{c|l|l}
\href{https://bit.ly/2Gmn73e}{PL\#} & Title & Section\\
\hline

\href{https://youtu.be/9xU6T7hQMzs}{01} &
{Locus of incenter and intouchpoint} & \ref{sec:intro} \\

\href{https://youtu.be/tMrBqfRBYik}{02} &
{Mittenpunkt stationary at elliptic billiard center} & \ref{sec:intro} \\

\href{https://youtu.be/P8ykpE_ZbZ8}{03} &
{Constant cosine sum and product} &
\ref{sec:rovr} \\

\href{https://youtu.be/ACinCf-D_Ok}{04} &
\makecell[lt]{Excentral cosine circle is\\stationary and exterior to elliptic billiard} &
\ref{sec:cosine-circle}\\

\href{https://youtu.be/hCQIT6_XhaQ}{05} &
\makecell[lt]{Alternative cosine circle video\\with D. Laurain's $r^*$ formula \eqref{eqn:rstar-laurain}} &
\ref{sec:cosine-circle}\\

\href{https://youtu.be/1gYb5Y3-rQI}{06} & Extouchpoints lie on $N=3$ caustic
 &
\ref{sec:rovr}\\

\end{tabular}
\label{tab:playlist}
\end{table}
\section{Orbit Vertices.}
\label{app:orbit-vertices}
Let the boundary of the billiard satisfy equation~\eqref{eqn:billiard-f}. Without loss of generality, assume that $a {\ge} b$.

Given a starting vertex $P_1$,  exit angle $\alpha$ (see equation~\eqref{eqn:cosalpha}), and a 3-periodic orbit $P_1P_2P_3$, vertex $P_2$ will be given by $(p_{2x},p_{2y})/q_2$, with \cite{garcia2019-ellipses}

\begin{align*}
%\label{eqn:p2}
p_{2x}=&-{b}^{4} \left(  \left(   a^2+{b}^{2}\right)\cos^{2}\alpha   -{a}^{2}  \right) x_1^{3}-2\,{a}^{4}{b}^{2} \cos \alpha  \,\sin  \alpha  \,  x_1^{2}{y_1}\\
&+{a}^{4} \left(  ({a
}^{2}-3\, {b}^{2}) \cos^{2} \alpha  +{b}^{2}
 \right) {x_1}\,y_1^{2}-2{a}^{6} \,\cos  \alpha  \sin   \alpha  \, y_1^{3} ,
\\
p_{2y}=&\; 2{b}^{6} \,\cos \alpha\sin \alpha\,   x_1^{3}+{b}^{4}\left(  ({b
 }^{2}-3\, {a}^{2}) \cos^{2} \alpha  +{a}^{2}
  \right) x_1^{2}{y_1}\\
&+  2\,{a}^{2} {b}^{4}\cos \alpha \sin
  \alpha \; {x_1} y_1^{2} -{
a}^{4}  \left(  \left(   a^2+{b}^{2}\right)\cos^{2}  \alpha  -{b}^{2}  \right)  y_1^{3}
\\
q_2=& \; {b}^{4} \left( a^2-c^2\cos^2\alpha   \right)
x_1^{2}+{a}^{4} \left(  {b}^{2}+c^2\cos^2 \alpha  
 \right) y_1^{2}\\
 & - 2\, {a}^{2}{b}^{2}{c^2}\cos \alpha\sin \alpha \; {x_1}\,{
y_1},
\end{align*}

and vertex $P_3$ will be given by $(p_{3x},p_{3y})/q_3$, with

\begin{align*}
p_{3x}=& \; {b}^{4} \left( {a}^{2}- \left( {b}^{2}+{a}^{2} \right) \right)
 \cos^{2}  \alpha   x_1^{3} +2\,{a}^{4}{b}^{2} \cos  \alpha \sin \alpha\,   x_1^{2}{ y_1}\\
 &+{a}^{4} \left( 
  \cos^{2}  \alpha  \left( {a}^{2}-3\,{b}^{2}
 \right) +{b}^{2} \right) { x_1}\, y_1^{2} +2\, {a}^{6} 
 \cos \alpha\,\sin \alpha\, y_1^{3}
\\
p_{3y}=&  -2\, {b}^{6} \cos \alpha\sin \alpha\, x_1^{3}+{b}^{4} \left( {a}^{2}+ \left( {b}^{2}-3\,{a}^{2} \right)    \cos^2\alpha \right) {{ x_1}}^{2}{ y_1}
\\
& -2\,{a}^{2}  {b}^{4}\cos
 \alpha  \sin \alpha\,  x_1 y_1^{2}+
 {a}^{4} \left( {b}^{2}- \left( {b}^{2}+{a}^{2} \right)   \cos^{2}  \alpha  \right)\,  y_1^{3},
\\
q_3=& \;{b}^{4} \left( {a}^{2}-{c^2}\cos^{2}\alpha   \right) x_1^{2}+{a}^{4} \left( {b}^{2}+c^2\cos^{2}\alpha  \right)  y_1^{2}\\
&+2\,{a}^{2}{b}^{
2} c^2 \cos\alpha \sin\alpha\, { x_1}\,{ y_1}.
\end{align*}

%\noindent Note the orbit sidelengths are 
%$s_1=\mid P_3-P_2\mid$, $s_2=\mid P_3-P_1\mid$, and
%$s_3=\mid P_2-P_1\mid$.

%\begin{acknowledgment}{Acknowledgments.}
\section*{Acknowledgement}
 We would like to thank  
Sergei Tabachnikov, Richard Schwartz, Arseniy Akopyan, Olga Romaskevich, Corentin Fierobe, Ethan Cotterill, Dominique Laurain, and Mark Helman for generously helping us during this research.  The first author is fellow of CNPq and coordinator of Project PRONEX, CNPq, FAPEG 2017 10 26 7000 508. The third author thanks the Federal University of Juiz de Fora for a 2018-2019 fellowship.
%\end{acknowledgment}

\bibliographystyle{maa}
\bibliography{elliptic_billiards_v4,elliptic_authors} 

\end{document}